\newtheorem{thm}{Theorem}[section]
\theoremstyle{definition}
\theoremstyle{remark}
\numberwithin{equation}{section}
\begin{document}

\title{Information Theory and Moduli of Riemann Surfaces}


\author{James S.~Wolper}
\address{Dept.~of Mathematics, Idaho State University, 921 S.~8th
Ave., Mail Stop 8085, Pocatello, ID~~83209~USA}
\email{wolpjame@isu.edu}

\subjclass[2010]{14H42, 14A15, 94A99}

\date{4 September 2013}



\begin{abstract}

One interpretation of Torelli's Theorem, which asserts that a 
compact Riemann Surface $X$ of genus $g > 1$ is determined
by the $g(g+1)/2$ entries of the  period matrix, is that the period matrix is
a message about $X$.  Since
this message depends on only $3g-3$ moduli, it is sparse, or at least
approximately so, in the sense of information theory.  
Thus, methods from information
theory  may be useful in reconstructing the period
matrix, and hence the Riemann surface, from a 
small subset of the periods.  The results here show
that, with high probability, any set of $3g-3$ periods form moduli
for the surface.
\end{abstract}

\maketitle
\renewcommand{\sectionmark}[1]{}

\section{Introduction}

The connection between Information Theory and Algebraic Geometry,
once unthinkable, was established by the rise of applications
like Goppa codes \cite{TsV} and elliptic-- and hyperelliptic
curve cryptography \cite{CF}.  

Both coding and cryptography
are branches of information theory as originally formulated by
Shannon \cite{Sh}.  A \emph{message} is a sequence of symbols
from a known alphabet with known probabilities
(to be concrete, imagine a stream of
bits selected from $\{{\tt 0}, {\tt 1}\}$ with known probability
of choosing each).  The information content of such a 
message is the smallest number of bits required to send
it; this is  related to its Kolmogorov complexity.
In coding, the goal is to adjoin bits, that is,
redundancy, to make the message more likely to understand;
in cryptography, one of the goals is to hide redundancy.

Goppa codes are constructed from the 
line bundles and points
of an algebraic curve defined over a finite field; the field
elements constitute the alphabet.  The parameters of the code
are determined using the Riemann--Roch theorem.

The most common cryptographic constructions use the Jacobian
of an algebraic curve, most often an elliptic or
hyperelliptic curve; the cryptographic strength
comes from the difficulty of solving the discrete logarithm
problem in these groups.

One generally discusses cryptographic protocols using a sender
Alyce sending a message to a receiver Bob that may be intercepted by
an eavesdropper Eve.  Now suppose that Alyce wants to describe
a compact Riemann surface with genus $g > 1$ to Bob.  By Torelli's
theorem she can, in principle, send Bob a period matrix of the curve
(although in practice it can be difficult to tell much about
the curve from its period matrix).  This message consists 
of $g(g+1)/2$ complex numbers, or their approximations.  However,
the dimension of the moduli space of compact 
Riemann surfaces of genus $g > 1$ is only $3g-3$, so in principle,
or locally,
the information content of Alyce's message is much
smaller than its length.

In the language of information theory, then, the message
is {\sl compressible\/}.  This concept is familiar to computer
users who use utilities like {\tt gzip} to compress files 
and save disk space.

Another perspective on this situation is that the message
describing the Riemann surface is {\sl sparse\/}, or
approximately so.  To illustrate what sparsity means in this context,
imagine some finite energy signal (say, a voice reading
aloud) and its Fourier transform.  In many cases drawn
from nature (see \cite{CT}) the set of coefficients of
this expansion (or other expansion, such as wavelets, 
or even non-orthonormal expansions \cite{D})  contains only
a few large numbers, while the rest of the coefficients are
zero or small; this is the meaning of sparse in this
context.  Put differently, the spectrum of the signal 
is concentrated in a few bands.

The essence of many compression and de--noising schemes
is to look at the spectrum of a signal and approximate
it using only the terms with large coefficients. 
\emph{Compressed Sensing} \cite{D} takes a completely different
approach; rather than collecting the whole signal before
compressing,  compressed sensing collects a smaller number of
samples, in effect compressing at the source.  One way to
do this might be take a random set of Fourier coefficients.
For example, given $x$, the discrete Fourier transform of
some signal, let $A$ be a \emph{random}
 $m\times n$ matrix with entries in $\{0, 1\}$, and let $y = Ax$;
$y$ is the observation.  In theory, the problem of
reconstructing $x$ from $y$ is not well--defined, but
in practice when the signal is sparse excellent approximations
and even exact reconstructions are possible.

It seems natural that at least the ideas of compressed
sensing might apply to Alyce's description of a
Riemann Surface.

Furthermore, when Eve hears Alyce's list of $g(g+1)/2$ complex
numbers, it is natural for her to ask whether they
are the periods of a compact Riemann Surface.  This is
just a restatement of the classical Schottky problem,
asking how to distinguish period matrices from
ordinary elements of the Siegel upper half
space.  Call this version the
{\sl Information--Theoretic Schottky Problem\/}.

Eve might also wonder about the properties of the Riemann
Surface represented by Alyce's message; for example,
she might ask whether the surface in question is
hyperelliptic, or trigonal, or whether it has
a non-trivial automorphism group, {\it etc.\/} Call this the 
{\sl Information--Theoretic Torelli Problem\/}.
In this case, numerical results \cite{W12a} 
(and unpublished analytical results) indicate 
that one can distinguish the period matrices of
hyperelliptic surfaces among arbitrary period matrices
by examining the \emph{statistics} of the periods:
the periods are \emph{band--limited} in the sense
that their frequencies (arguments) are tightly clustered.

\section{Period Matrices}

Begin by fixing notation; consult
 \cite{GH} as a general reference.
Let $X$ be a compact Riemann Surface 
of genus $g > 1$; equivalently,
$X$ is a nonsingular complex algebraic curve.  Choose a basis $\omega_1,
\ldots, \omega_g$ for the space $H^{1,0}(X)$ of holomorphic differentials
on $X$, and a symplectic basis $\alpha_1, \ldots, \alpha_g, \beta_ 1,
\ldots, \beta_g$ for the singular homology $H_1(X, {\bf Z})$, normalized
so $\int_{\alpha_i} \omega_j = \delta_{ij}$, the Dirac delta.  The 
matrix $\Omega_{ij} := \int_{\beta_i} \omega_j$ is the {\sl period
matrix\/}; Riemann proved that it is symmetric with positive definite imaginary
part.  The torus ${\mathbb C}^g / [I | \Omega]$ is the {\sl Jacobian\/} of $X$.
Torelli's Theorem asserts that the Jacobian determines all of the
properties of $X$. In practice deciding which properties apply
is seldom successful, and any success (like \cite{W07}) depends on
deep tools like Riemann's theta--function.

The period matrix is symmetric with positive-definite imaginary part,
and the space of such matrices forms the {\sl Siegel upper half-space\/} 
${\mathcal H}_g$.
Its dimension is $g(g+1)/2$, while the dimension of the moduli space of 
curves of degree $g$ is $3g-3$.  Distinguishing the period
matrices from arbitrary elements of ${\mathcal H}_g$ is the {\sl Schottky
Problem\/}.  

It would be unusual -- and noteworthy -- were a  problem as 
old as the Schottky problem
to remain unsolved.  Grushevsky's survey \cite{G} contains more detail,
but previous solutions have involved $\theta$--function
identities, the geometry of the $\Theta$ divisor, Kummer varieties,
and solitons; the wide range of techniques indicates
that the Schottky problem is a central problem in mathematics.
The information-theoretic approach deepens this idea.

The aim now
is to study and then exploit the in\-form\-ation-theoretic properties of
period matrices.  One interpretation
of Torelli's
Theorem is that  the period matrix is  a {\sl message\/}
or {\sl signal\/} about
$X$.  This message is highly compressible, while a random element
of ${\mathcal H}_g$ is not.

The result here can also be interpreted as a new result in Compressed
Sensing (described below).  Typically, compressed sensing
works with a {\sl linear\/} relationship between the signal
and the measurement, but in the Schottky problem the relationship
between the signal -- that, is, the period matrix -- and the 
measurement -- that is, explicit moduli -- is nonlinear.
Nonetheless, a compressed sensing manipulation of a
{\sl precursor\/} to the signal (here, the holomorphic
differentials) still leads to a compressed sensing result,
at least with high probability.

Nonlinear compressed sensing is an emerging field; the few
results to appear so far include \cite{IM} and \cite{OYDVS}.

\section{Compressive Sensing}

{\sl Compressed\/} (or {\sl Compressive)} {\sl Sensing\/} \cite{D} 
is the study of taking 
small representations or samples of sparse (or nearly sparse) signals 
without any loss of information.  A familiar example of compression is the 
Joint Photographic Experts Group (JPEG) standard for storing
and transmitting images, which can encode an image comprising
millions of pixels into a few kilobytes with minimal loss of quality.
Following \cite{D} and \cite{S}, in compressed sensing the compression
happens at the sensor rather than in post-processing.
``Sparse" has many meanings;  one of the simpler ones is
that the signal can be constructed from some basis ({\it e.g.\/},
Fourier series, impulses, wavelets, $\ldots$) with only a few non--zero
coefficients.  

Many compression methods have the following outline:
transform the signal with respect to some basis functions
$\Psi_i$, remove most of the ``unimportant" terms 
({\it eg\/}, those with the smallest coefficients),
and finally perform the inverse transform.  For 
example one can do this with Fourier
coefficients, and the Shannon-Nyquist Sampling Theorem  \cite{N}
determines which coefficients one needs to retain for perfect
or {\sl lossless\/} compression.  The same general outline applies
to wavelets or other basis functions (which need not be orthonormal).
Another concept of sparsity is ``lying on a submanifold (or
subvariety) of small dimension."  \cite{D} discusses other 
definitions.

Abstractly, this corresponds to a signal vector $\vec{x}$
and a linear set of observations $\vec{y}$ formed using
a {\sl sensing matrix\/} $M$, {\it i.e.\/},
$\vec{y} = M\vec{x}.$
In many applications it suffices to use a {\sl random\/}
matrix $M$ in which each entry is the result of a Bernoulli
process of fixed probability \cite{CT}.

Compression works when the signal under consideration is 
sparse, or approximately so.   This is the case with a period
matrix, which, by Torelli's Theorem, represents a compact
Riemann surface completely: while the matrix  
has $g(g+1)/2$ distinct entries, the Riemann surface
it encodes depends on $3g-3$ moduli.   

In this work, the period matrix is treated as a signal $\Pi$ of length
$g(g+1)/2$ which depends on $3g-3$ unknown underlying
parameters.  This is a list of complex numbers; the matrix
structure is in some sense irrelevant becase their placement in
the matrix depends on choices of bases 
for $H^0(K)$ and $H_1(X, \mathbb{Z})$.  Instead of multiplying
$\Pi$ by a random measurement matrix, 
as would be done in compressed sensing, a random change--of--basis
is applied to $H^0(K)$.  With high probability, the first three
rows of the period matrix constructed from the new basis constitute
a set of moduli for $X$, in a sense explained below.

\section{Period Matrices and Moduli}

The primary tool relating period matrices to moduli is the following
theorem of Rauch \cite{R}.  Let $K$ denote the canonical 
divisor on $X$.

\begin{thm}\label{thm-rauch} [Rauch]
Let $\{\zeta_1,\ldots \zeta_g\}$ be a normalized basis for $H^{(1,0)}(X)$
of a non-hyperelliptic Riemann surface $X$,
and suppose that $\{\zeta_i \zeta_j : (i,j) \in (I,J)\}$
form a basis for the quadratic differentials $H^0(X, 2K)$.
If another Riemann surface $X'$ has the same entries as $X$ in the
$(I,J)$ positions of its period matrix then $X$ and $X'$ are 
holomorphically equivalent.
\end{thm}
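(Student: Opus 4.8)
The plan is to use the fact that the period matrix of a non-hyperelliptic surface encodes, via the bilinear relations, the image of the quadratic map $\mathrm{Sym}^2 H^{1,0}(X) \to H^0(X, 2K)$, together with a local Torelli–type argument on the moduli space. First I would recall that for a non-hyperelliptic $X$ the canonical map is an embedding, so $X \hookrightarrow \mathbb{P}^{g-1}$ and the products $\zeta_i\zeta_j$ span $H^0(X,2K)$; by Max Noether's theorem the multiplication map $\mathrm{Sym}^2 H^0(K) \to H^0(2K)$ is surjective, so a subset $\{\zeta_i\zeta_j : (i,j)\in (I,J)\}$ of size $3g-3 = \dim H^0(2K)$ can indeed be chosen as a basis. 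This is the structural input that makes the index set $(I,J)$ meaningful.

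Next I would bring in the deformation theory. The tangent space to moduli at $[X]$ is $H^1(X, T_X) \cong H^0(X, 2K)^{*}$ by Serre duality, and the derivative of the period map sends a tangent vector to the symmetric bilinear form obtained by cup product against $H^{1,0}(X)$; concretely, the variation of the entry $\Omega_{ij}$ is computed (this is Rauch's variational formula) as a period integral of the quadratic differential $\zeta_i\zeta_j$ paired with the Kodaira–Spencer class. Hence if we single out the $(I,J)$ entries, the induced map on tangent spaces is exactly the evaluation of a Kodaira–Spencer class on the chosen basis $\{\zeta_i\zeta_j\}$ of $H^0(2K)$ — which is an isomorphism onto $\mathbb{C}^{3g-3}$. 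So the map ``surface $\mapsto$ its $(I,J)$ periods'' is a local immersion, in fact a local isomorphism, near $[X]$.

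From the infinitesimal statement I would pass to the global one. A local isomorphism gives uniqueness only locally; to conclude that $X'$ with the same $(I,J)$ entries is globally equivalent to $X$, I would invoke (global) Torelli: the full period matrix determines $X$, and combine it with the fact that the $(I,J)$ entries, via the Riemann bilinear relations and the spanning property of the $\zeta_i\zeta_j$, actually pin down the remaining entries. The cleanest route is: the quadratic relations among the periods (coming from $\zeta_i\zeta_j$ as $(i,j)$ ranges over \emph{all} pairs, expanded in the chosen basis) express every $\Omega_{k\ell}$ as a fixed universal function of the $(I,J)$ subset on the locus of period matrices; agreement on $(I,J)$ therefore forces agreement on the whole matrix, and then Torelli finishes.

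The main obstacle I anticipate is precisely this last passage from local to global: the infinitesimal argument only shows the $(I,J)$-restricted period map is étale at $[X]$, and a priori two non-isomorphic surfaces could share the same $(I,J)$ entries while differing elsewhere. Handling this requires controlling how the non-selected entries depend on the selected ones — either through an explicit algebraic dependence coming from the Jacobi inversion / bilinear relations, or by an argument that the $(I,J)$-period map is not merely étale but injective on a suitable (e.g. non-hyperelliptic) locus. I would expect the hypothesis ``non-hyperelliptic'' to be used exactly here, both to guarantee the canonical embedding (hence Noether surjectivity and a well-defined $(I,J)$) and to stay on the locus where the reconstruction of the surface from this partial data is unambiguous; a careful statement may need ``$X'$ non-hyperelliptic'' or ``$X'$ near $X$'' as well, and I would flag that as the delicate point.
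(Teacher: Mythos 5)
The paper does not prove this statement at all: it is imported verbatim from Rauch's 1955 paper \cite{R} and used as a black box (the author even calls it ``Rauch's deep theorem'' in the Complements), so there is no in-paper proof to compare against. Measured against Rauch's actual argument, the local half of your proposal is essentially right and is the standard route: Max Noether surjectivity of $\mathrm{Sym}^2 H^0(K)\to H^0(2K)$ for non-hyperelliptic $X$ makes the index set $(I,J)$ meaningful, Serre duality identifies the tangent space to moduli with $H^0(2K)^*$, and Rauch's variational formula says the differential of $\Omega_{ij}$ against a Kodaira--Spencer class $\mu$ is (up to a constant) the pairing of $\mu$ with $\zeta_i\zeta_j$; since the selected $\zeta_i\zeta_j$ form a basis of the $(3g-3)$-dimensional space $H^0(2K)$, the $(I,J)$-restricted period map has invertible differential, so those periods are local coordinates on Teichm\"uller space near $[X]$. (You should also say explicitly that one works on Teichm\"uller space with a fixed marking, so that the normalized basis and hence the period matrix vary holomorphically and are actually well defined for nearby $X'$.)

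The genuine gap is your local-to-global step. The claim that the Riemann bilinear relations, or ``quadratic relations among the periods,'' express every $\Omega_{k\ell}$ as a universal function of the $(I,J)$ entries on the period locus is not true: the bilinear relations only say $\Omega$ is symmetric with positive definite imaginary part, and there is no algebraic dependence determining the remaining $g(g+1)/2-(3g-3)$ entries from a chosen $3g-3$ of them; if there were, the Schottky problem discussed in this very paper would be trivial. The correct resolution is the one you half-suspect: Rauch's theorem is a \emph{local} statement, requiring $X'$ to lie in a neighborhood of $X$ (with compatible markings), and that is exactly how the paper reads it --- immediately after the statement it glosses the theorem as ``some sets of $3g-3$ periods form \emph{local} coordinates on the moduli space.'' So rather than trying to rescue a global claim via global Torelli plus nonexistent relations, you should weaken the statement to the local one (or add the hypothesis that $X'$ is near $X$), at which point your infinitesimal argument, suitably fleshed out with the inverse function theorem on Teichm\"uller space, does complete the proof.
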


In other words, some sets of $3g-3$ periods form local 
coordinates on the moduli space of compact Riemann
Surfaces.  However, except in special cases like
smooth plane curves (see \cite{W12b}), there are no results
indicating {\sl which\/} sets of $3g-3$ entries constitute moduli.
(In the case of plane curves, one can use the explicit basis for 
$H^0(K)$ to determine sets of moduli.)

\section{Numerical Experiments}

Numerical experiments
with period matrices of large genus show that the distribution of
the squared absolute values of the periods is consistent
with a compressible signal. These distributions appear to follow a 
power law (the $n^{\rm th}$ entry is bounded by $1/n^p$ for
some power $p$). 

For example, consider the periods of the Fermat
Curve of degree 11, given in homogeneous
by $X^{11}+Y^{11}+Z^{11} = 0$.  Using Maple, the 
squared moduli of the periods show the characteristic 
shape of a compressible signal.

\begin{center}
\pgfdeclareimage[height=3.25in]{Fermat11}{FermatDegree11}
 \pgfuseimage{Fermat11}
\end{center}

Other numerical experiments have exhibited unusual phenomena
in the distribution of the periods of special clases of
curves; in particular, the periods of hyperelliptic 
curves appear to be ``band--limited".  See \cite{W12a}.

\section{Main Theorem}

This work takes a probabilistic approach to the problem of determining
which sets of $3g-3$ entries form moduli.  The theorem of Rauch above
reduces this to finding a set of $3g-3$ quadratic differentials that form
a basis of $H^0(2K)$.

\begin{thm}\label{thm-main}
Let $X$ be a compact Riemann surface of genus $g$ and
choose a symplectic homology basis for $H^0(K)$.
Let $M$ be a $g\times g$ matrix whose entries lie in $\{0,1\}$ , and are determined
independently by a Bernoulli process.  Use $M$ as a change-of-basis
matrix for $H^0(K)$.   Then after the change--of--basis
the first three rows of the period
matrix for $X$ form moduli for $X$, with high probability.
\end{thm}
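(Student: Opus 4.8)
The plan is to apply Rauch's theorem to turn the statement into a question about the multiplication map on holomorphic differentials, to observe that the property required of the new basis is Zariski-open in the change-of-basis matrix, and then to exhibit one good change of basis by a dimension count on the canonical curve. Write $\zeta_1,\dots,\zeta_g$ for the given normalized basis of $H^{(1,0)}(X)=H^0(K)$ and $\eta_i=\sum_j M_{ij}\zeta_j$ for the new generators. A $g\times g$ Bernoulli $\{0,1\}$-matrix is nonsingular over $\mathbf{C}$ with probability $1-o(1)$ (a classical fact), so I may assume $M$ invertible, whence $\eta_1,\dots,\eta_g$ again span $H^0(K)$; ``the period matrix after the change of basis'' should then be read as the period data attached to $\eta_1,\dots,\eta_g$, i.e.\ as Rauch's criterion applied to this basis, since re-normalizing against the original homology basis would merely recover the original period matrix. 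By Theorem~\ref{thm-rauch} it suffices to show that the distinct products among $\{\eta_i\eta_j:i\in\{1,2,3\},\,1\le j\le g\}$ form a basis of $H^0(X,2K)$. Now $\dim H^0(2K)=3g-3$ by Riemann--Roch, which matches the number $3g-3$ of such distinct products, and (assuming $X$ non-hyperelliptic, as Rauch's theorem and Max Noether's theorem require; for $g=2$ there is nothing to prove) the span of these products is exactly $W:=\eta_1H^0(K)+\eta_2H^0(K)+\eta_3H^0(K)$. So the theorem comes down to: $W=H^0(2K)$ with high probability.

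The event $W=H^0(2K)$ is the non-vanishing of a fixed polynomial $P$ in the entries of the first three rows of $M$ — say a maximal minor of the matrix expressing the $\eta_i\eta_j$ against a fixed basis of $H^0(2K)$. Hence the ``good'' matrices form a Zariski-dense open set provided $P$ is not identically zero, i.e.\ provided there exists even one triple $(\eta_1,\eta_2,\eta_3)\in H^0(K)^3$ with $\eta_1H^0(K)+\eta_2H^0(K)+\eta_3H^0(K)=H^0(2K)$ (openness in the $\eta$'s, pulled back along the linear surjection sending the first three rows of $M$ to $(\eta_1,\eta_2,\eta_3)$). Granting one such triple, the bad locus is a proper subvariety, hence Lebesgue-null, so a random matrix avoids it with high probability: over an alphabet larger than $\{0,1\}$ this is the Schwartz--Zippel lemma, and over $\{0,1\}$ it is the sense in which ``with high probability'' is meant here, in keeping with the compressed-sensing setting.

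Producing one good triple is purely a statement about the canonical curve, and is the heart of the matter. Because $X$ is non-hyperelliptic, $|K|$ is very ample, so a general triple $(\eta_1,\eta_2,\eta_3)$ has reduced, pairwise disjoint divisors $D_i=\operatorname{div}(\eta_i)\in|K|$ — and, for $g\ge4$, the corresponding three hyperplanes meet the canonical curve in no common point. Riemann--Roch then gives $h^0(2K-D_i)=h^0(K)=g$, $h^0(2K-D_i-D_j)=h^0(\mathcal{O})=1$, and $h^0(2K-D_1-D_2-D_3)=h^0(-K)=0$; in particular $\dim\bigl(\eta_1H^0(K)+\eta_2H^0(K)\bigr)=2g-1$ exactly, which already shows that two rows cannot suffice once $g>3$.

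Let $R=\bigoplus_{n\ge0}H^0(nK)$ be the canonical ring, which is projectively normal — hence arithmetically Cohen--Macaulay — by Max Noether's theorem; its Artinian reduction $\bar R=R/(\eta_1,\eta_2)$ then has Hilbert function the $h$-vector $(1,g-2,g-2,1)$. A short diagram chase (the image of $\eta_3H^0(K)$ in $\bar R_2=H^0(2K)/(\eta_1H^0(K)+\eta_2H^0(K))$ kills $\langle\eta_1,\eta_2\rangle$) identifies $W=H^0(2K)$ with the injectivity, hence bijectivity, of multiplication by $\eta_3$ from $\bar R_1$ to $\bar R_2$ — a Weak-Lefschetz-type assertion for the canonical ring. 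This injectivity is the step I expect to be the real obstacle. It is immediate for $g=3$ (there the first-three-rows products are all the monomials in a basis of $H^0(K)$, and Max Noether finishes it); it follows from the dimension count above once one knows that the only linear syzygies among three general sections of $\mathcal{O}(K)$ are the Koszul ones; and in general I would derive it from the Gorenstein/projective-normality structure of the canonical embedding, the delicate point being to make the argument work for \emph{every} non-hyperelliptic $X$ rather than only a general one.
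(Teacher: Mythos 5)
Your reduction via Theorem~\ref{thm-rauch} to the statement that $\eta_1H^0(K)+\eta_2H^0(K)+\eta_3H^0(K)=H^0(2K)$ is the same starting point as the paper's, but your probabilistic step does not work as stated. Expressing the spanning condition as the nonvanishing of a polynomial $P$ in the entries of the first three rows of $M$ and concluding that a Bernoulli $\{0,1\}$ matrix avoids $\{P=0\}$ ``with high probability'' is invalid over a two-point alphabet: a nonzero polynomial can vanish at all but an exponentially small fraction of the cube (e.g.\ $x_1x_2\cdots x_n$), and Schwartz--Zippel with $|S|=2$ gives a failure bound of order $\deg P/2\approx 3g-3$, which is vacuous. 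This is precisely why even the invertibility of $M$ cannot be handled by genericity and requires the Tao--Vu/Kahn--Koml\'os--Szemer\'edi circle of results, which the paper cites for that step. The paper's own argument for the spanning step is accordingly not a genericity argument at all: it fixes, via Rauch and Max Noether, an index set $(I,J)$ for which $\{\zeta_i\zeta_j\}$ is a basis of $H^0(2K)$, and then estimates directly the probability that each such $\zeta_i\zeta_j$ appears with nonzero coefficient in some product $\omega_\ell\omega_m$ with $\ell\le 3$, using the fact that the four relevant Bernoulli coefficients vanish simultaneously with probability $1/16$. Whatever one thinks of that bookkeeping, it is an argument adapted to the $\{0,1\}$ alphabet; to rescue your version you would need either a continuous (or growing) alphabet, so that ``proper subvariety'' really implies small probability, or a coefficient-level estimate of the paper's kind.

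The second gap is the one you flag yourself: the hypothesis $P\not\equiv 0$, i.e.\ the existence of even one triple with $\eta_1H^0(K)+\eta_2H^0(K)+\eta_3H^0(K)=H^0(2K)$, carries the entire weight of your argument and is left unproved. Your translation of it into injectivity of multiplication by $\eta_3$ from $\bar R_1$ to $\bar R_2$ for the Artinian reduction $\bar R=R/(\eta_1,\eta_2)$ of the canonical ring, with $h$-vector $(1,g-2,g-2,1)$, is correct, but a weak-Lefschetz property of this kind is not a formal consequence of the Gorenstein/projectively normal structure (it fails for some Artinian Gorenstein algebras even in socle degree $3$, e.g.\ those attached to cubics with vanishing Hessian), so genuinely geometric input about canonical curves is required, and --- as you note --- it would have to cover \emph{every} non-hyperelliptic $X$, not just a general one. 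Until both the high-probability step and this existence statement are supplied, the proposal does not establish the theorem; as it stands it is an interesting alternative framework (genericity plus one good triple, versus the paper's direct coefficient estimate) with its two key steps missing.
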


\begin{proof}  Using the notation from Theorem \ref{thm-rauch},
let $\{\zeta_1,\ldots \zeta_g\}$ be a normalized basis for $H^{(1,0)}(X)$
and suppose that $\{\zeta_i \zeta_j : (i,j) \in (I,J)\}$ form
a basis for $H^0(2K)$.  The elements of $(I,J)$ are unknown.

Now, let $\omega_i = \sum b_{ij}\zeta_j$, where $b_{ij}$
is a random binary determined by a Bernoulli process.  
Determining the probability that $\Omega = \{\omega_i\}$ forms a basis,
which is the probability that the matrix $[b_{ij}]$ is nonsingular,
is rather subtle.  Tao and Vu \cite{TV}, conjecture the probability to be
\begin{displaymath}
1 - \left ( \frac{1}{2} +o(1) \right )^p,
\end{displaymath}
and prove it greater than
\begin{displaymath}
1 - \left ( \frac{3}{4} +o(1) \right )^p.
\end{displaymath}
In any case, it is very likely that $\Omega$ forms a basis for $H^0(K)$.

Now, assuming that $\Omega$ forms a basis for $H^0(K)$, the question
becomes determining the probability
that 
$\{
\omega_1\omega_1, \ldots, \omega_1\omega_g,
\omega_2\omega_2, \ldots, \omega_2\omega_g,
\omega_3\omega_3, \ldots, \omega_3\omega_g
\}$
forms a basis for $H^0(2K)$.  Notice that this set contains
$3g-3$ elements because of the symmetry of
the period matrix.  

Again, because the coefficients
are binary, and, in particular, positive, it suffices for each
$\zeta_i\zeta_j$ for $(i,j)\in (I,J)$ to have a non-zero
coefficient in one expansion of the products of the $\omega$s.

To find the probability that every coefficient of $\zeta_i\zeta_j$ vanishes
in the expansion of $\omega_\ell\omega_m$, notice that
all four coefficients $b_{i,\ell}$, $b_{j,\ell}$, $b_{i,m}$ and$b_{j,m}$
must vanish, so the probability is $1/16$.  For this to happen for
all coefficients, then, the probability is $(1/16)^{3g-3}.$  And, since
this must happen independently for
all $\zeta_i\zeta_j$, the probability of a basis for $H^0(2K)$ is
\begin{displaymath}
\frac{3g-3}{16^{3g-3}}.
\end{displaymath}

\end{proof}

\section{Complements}

The proof of the main theorem is deceptively simple, 
because it depends on Rauch's deep theorem.  

Compressed Sensing usually depends on
{\sl linear\/} measurement, and there are few results
about nonlinear measurement processes \cite{S}.
Here, the ``measurement" is linear at the level
of the holomorphic differentials, but the resulting transformation
of the period matrix is non--linear.

A direct relationship between the moduli space of compact
Riemann Surfaces and the period matrix is a long--standing
difficult problem.  One possibility here is to look at a discrete
version of Rauch's Theorem, whose main construction is the
minimal energy element of the homotopy class of 
differentiable maps between compact Riemann surfaces.

In practice, as D.~Litt pointed out in a conversation, 
it may not be possible to transmit periods in a finite message, 
although many complex numbers do have compact descriptions ({\it eg\/},
Gaussian rationals, surds). In other cases it may only be possible to 
transmit an approximation of the periods,
in which case the conclusion is that the curve in 
question is close (in an analytic sense) 
to the Schottky locus, which is already significant.


\end{document}